\theoremstyle{plain}
\newtheorem{theorem}{Theorem}
\newtheorem{lemma}[theorem]{Lemma}
\newtheorem{corollary}[theorem]{Corollary}
\theoremstyle{definition}
\newtheorem{definition}[theorem]{Definition}
\theoremstyle{remark}
\newtheorem{remark}[theorem]{Remark}
\def\d#1{{#1\kern-0.4em\char"16\kern-0.1em}}
\def\D#1{{\raise0.2ex\hbox{-}\kern-0.4em #1}}
\newcounter{zd}
\newcounter{zdr}[subsection]
\def\R{I\!\!R}
\def\N{I\!\!N}
\def\C{I\!\!\!C}
\def\F{{\cal F}}
\def\pa{\partial}
\def\cal{\mathcal}
\def\dscon{\relbar\joinrel\rightharpoonup}
\def\Dscon{\relbar\joinrel\dscon}
\def\dup#1#2#3#4{{}_{#1\!}\langle\, #2 , #3 \,\rangle_{#4}}
\def\nor#1#2{{\| #1 \|}_{#2}}
\def\povrhsk#1{\smash{
        \mathop{\;\Dscon\;}\limits^{#1}}}
\def\oi#1#2{\langle#1,#2\rangle}
\def\zoi#1#2{[#1,#2\rangle}
\def\supp{{\rm supp\,}}
\def\lG{{\cal G}}
\def\lA{{\cal A}}
\let\ph=\varphi
\begin{document}
\title{ $H$-distributions --- an extension of $H$-measures}

\thanks{The work of N.A.~is supported in part by the Croatian
MZOS through projects 037--0372787--2795 and 037--1193086--3226.}

\author{N.~Antoni\'c}
\address{Nenad Antoni\'c, Department of Mathematics, Faculty of Science, University of Zagreb,
    Bijeni\v{c}ka c.~30,    HR--10000 Zagreb, Croatia }\email{nenad@math.hr}
\author{D.~Mitrovic}\address{Darko Mitrovic, University of Montenegro,
    Faculty of Mathematics, Cetinjski put bb, 81000 Podgorica, Montenegro and }
\address{ University of Bergen, Faculty of
    Mathematics, Johannes Bruns gt. 12, Bergen,
    Norway}\email{matematika@t-com.me}

\date{}

\subjclass{42B15, 47F05}

\keywords{$H$-measures; $L^p$ framework; localisation principle;
div-curl lemma}

\begin{abstract}
We use the continuity  of Fourier multiplier operators on $L^p$ to introduce
the $H$-distributions --- an extension of $H$-measures in the
$L^p$ framework. We apply the $H$-distributions to obtain an $L^p$
version of the localisation principle, and reprove the $L^p$---$L^q$
variant of the Murat--Tartar div-curl lemma.
\end{abstract}

\maketitle

\section{Introduction}

In the study of partial differential equations, quite often it is of
interest to determine whether some $L^p$ weakly convergent
sequence converges strongly. Various techniques and tools have been
developed for that purpose (for the state of the art twenty years
ago see \cite{Evans}); of more modern ones we only mention the
{\sl H-measures}\/ of Luc Tartar \cite{Tar}, independently
introduced by Patrick G\'erard \cite{Ger} under the name of {\sl
microlocal defect measures}. $H$-measures proved to be very powerful
tool in a number of applications (see e.g. \cite{MA1, MA2, Ant3,
AntV, ?2, ?9, KohnRV, ?3, Pan} and references therein, which is
surely an incomplete list). The main theorem on the existence of
H-measures, in an equivalent form suitable for our purposes, reads:

\begin{theorem}
\label{tbasic1}
If scalar sequences $u_n, v_n \dscon 0$ weakly in
$L^2(\R^d)$, then there exist subsequences $(u_{n'}), (v_{n'})$ and
a complex Radon measure $\mu$ on $\R^d\times S^{d-1}$ such that for
every $\ph_1,\ph_2\in C_0(\R^d)$ and every $\psi\in C(S^{d-1})$
\begin{equation}
\label{dec427}
  \quad
    \lim_{n'} \langle {\cal A}_\psi(\ph_1 u_{n'})| \ph_2 v_{n'} \rangle =
    \lim_{n'} \int_{\R^d} {\cal A}_\psi(\ph_1 u_{n'}) \overline{\ph_2 v_{n'}}\,dx = \langle \mu,\ph_1\bar\ph_2\bar\psi\rangle \;,
\end{equation} where ${\cal A}_\psi$ is the Fourier multiplier operator with
the symbol $\psi$:
$$
{\cal A}_\psi u := \bar{\cal F}(\psi\hat u)\;.
$$
\end{theorem}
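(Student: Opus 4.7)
My plan is to follow Tartar's original strategy: three of the four steps are soft functional analysis, and the one substantial input is the harmonic-analytic commutator lemma.

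First, for each $n$ I would define the trilinear functional
\begin{equation*}
\mu_n(\phi_1, \phi_2, \psi) := \int_{\R^d} \mathcal{A}_\psi(\phi_1 u_n)\, \overline{\phi_2 v_n}\,dx .
\end{equation*}
Extending $\psi$ to $\R^d \setminus \{0\}$ as homogeneous of degree zero, Plancherel yields $\|\mathcal{A}_\psi\|_{\mathcal{L}(L^2)} \le \|\psi\|_{L^\infty(S^{d-1})}$, and since weakly convergent sequences in $L^2$ are bounded, $|\mu_n(\phi_1,\phi_2,\psi)| \le C\|\phi_1\|_\infty \|\phi_2\|_\infty \|\psi\|_\infty$ uniformly in $n$. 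Using separability of $C_0(\R^d)$ and $C(S^{d-1})$ I would diagonally extract a subsequence $(n')$ along which $\mu_{n'}(\phi_1,\phi_2,\psi)$ converges for every triple in fixed countable dense sets; the uniform bound then extends the limit $\mu_0$ to a continuous trilinear form on the whole product space.

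The crux is to identify the dependence of $\mu_0$ on $(\phi_1,\phi_2)$ through the single product $\phi_1\bar\phi_2$. For this I would prove Tartar's \emph{first commutator lemma}: for $\phi \in C_0(\R^d)$ and $\psi \in C(S^{d-1})$, the commutator $[\mathcal{A}_\psi, M_\phi]$ (with $M_\phi$ pointwise multiplication by $\phi$) is compact on $L^2(\R^d)$. I would establish this first for smooth symbols, where $[\mathcal{A}_\psi, M_\phi]$ is a pseudodifferential operator of order $-1$ and hence compact on $L^2$ via Rellich--Kondrachov, and then extend to general $(\phi,\psi) \in C_0 \times C$ by norm approximation, using that compact operators form a closed subspace of $\mathcal{L}(L^2)$. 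Given compactness, $u_{n'} \rightharpoonup 0$ forces $[\mathcal{A}_\psi, M_{\phi_1}] u_{n'} \to 0$ strongly in $L^2$, so
\begin{equation*}
\mu_0(\phi_1,\phi_2,\psi) = \lim_{n'} \int_{\R^d} (\phi_1\bar\phi_2)\,(\mathcal{A}_\psi u_{n'})\,\overline{v_{n'}}\,dx ,
\end{equation*}
and the right-hand side depends on $(\phi_1,\phi_2)$ only through $\Phi := \phi_1\bar\phi_2$. Since every $\Phi \in C_c(\R^d)$ factors as $\Phi\cdot\chi$ for a suitable cut-off $\chi \in C_c$, by linearity and density $\mu_0$ descends to a bounded bilinear form on $C_0(\R^d)\times C(S^{d-1})$.

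Finally, the Stone--Weierstrass theorem gives density of algebraic tensors in $C_0(\R^d \times S^{d-1})$ (locally compact, with $S^{d-1}$ compact), so this bilinear form extends uniquely to a bounded linear functional on $C_0(\R^d \times S^{d-1})$, which Riesz--Markov represents as the required complex Radon measure $\mu$ satisfying \eqref{dec427}. The main obstacle is the commutator lemma, which carries all the harmonic-analytic content; everything else is boundedness, separability, and density.
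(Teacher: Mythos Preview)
The paper does not actually prove Theorem~\ref{tbasic1}; it is stated in the introduction as the known $H$-measure existence theorem and attributed to Tartar~\cite{Tar} and G\'erard~\cite{Ger}. Your sketch is essentially Tartar's original argument and is correct: uniform trilinear bound via Plancherel, diagonal extraction on a dense set, the first commutation lemma to collapse $(\ph_1,\ph_2)\mapsto\ph_1\bar\ph_2$, and Stone--Weierstrass plus Riesz--Markov to pass from the bilinear form on $C_0(\R^d)\times C(S^{d-1})$ to a Radon measure on the product.

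It is worth contrasting your outline with the paper's proof of its main result, Theorem~\ref{tbasic2}, which generalises Theorem~\ref{tbasic1} to the $L^p$ setting. There the authors replace your diagonal-extraction step by the abstract Lemma~\ref{bil} (equibounded bilinear forms on separable Banach spaces have a pointwise-convergent subsequence), use an $L^q$ consequence of the commutator lemma (Lemma~\ref{scl}) rather than the $L^2$ statement directly, and---since in $L^p$ one only obtains a bounded bilinear form on $C_0(\R^d)\times C^\kappa(S^{d-1})$ rather than on $C_0\times C$---invoke the Schwartz kernel theorem instead of Stone--Weierstrass and Riesz--Markov to produce a distribution rather than a measure. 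In the $L^2$ case your route is cleaner and yields the sharper conclusion (a Radon measure), exactly because Plancherel gives the multiplier bound in terms of $\|\psi\|_{C(S^{d-1})}$ rather than $\|\psi\|_{C^\kappa}$.
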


The measure $\mu$ we call the {\em $H$-measure} corresponding to the
sequence $(u_n,v_n)$.
In fact, it corresponds to the off-diagonal element of the corresponding $2\times2$
matrix Radon measure of the vector function $(u_n, v_n)$ (cf.~\cite{Ant}).

\begin{remark}
\label{rem_nenad}
After applying the Plancherel theorem, the term under
the limit sign in Theorem \ref{tbasic1} takes the form
\begin{equation}
\label{ns30} \int_{\R^d} \widehat{\ph_1 u_{n'}}
\overline{\widehat{\ph_2 v_{n'}}}\psi\,d\xi\;,
\end{equation}
where by $\hat u(\xi) = ({\cal F}u)(\xi) = \int_{\R^d} e^{-2\pi
ix\cdot\xi} u(x)\,dx$ we denote the Fourier transform on $\R^d$
(with the inverse $(\bar{\cal F}v)(x) := \int_{\R^d} e^{2\pi
ix\cdot\xi} v(\xi)\,d\xi$).

In the particular case of $u_n=v_n$, $\mu$ roughly describes the loss
of strong $L^2$ precompactness of sequence $(u_n)$. Indeed, it
is not difficult to see that if $(u_n)$ is
strongly convergent in $L^2$, then the corresponding H-measure is
trivial; on the other hand, if the H-measure is trivial,
then $u_n\longrightarrow 0$ in $L^2_{loc}(\R^d)$ (for the details in a
similar situation see \cite{Ant4}).
\end{remark}

In order to explain how to apply this idea to $L^p$-weakly
converging sequences when $p\ne 2$, consider the integral in
\eqref{dec427}. The Cauchy-Schwartz inequality and the Plancherel
theorem imply (see e.g. \cite[p.~198]{Tar})
\begin{equation}
\Big| \int_{\R^d}{\cal A}_\psi(\varphi_1 u_{n'})\overline{\varphi_2 v_{n'}}dx\Big|\leq C
\|\psi\|_{C(S^{d-1})}\|\varphi_1 \overline{\varphi_2}\|_{C_0(\R^d)}\;,
\end{equation}
where $C$ depends on a uniform bound for $\|(u_{n},v_{n})\|_{L^2(\R^d;\R^2)}$.
Roughly speaking, this fact and the
linearity of integral in \eqref{dec427} with respect to
$\varphi_1\overline{\varphi_2}$ and $\psi$ enable us to state that
the limit in \eqref{dec427} is a Radon measure (a bounded linear functional on
$C_0(\R^d\times S^{d-1})$). Furthermore, the bound is obtained by a
simple estimate $\|{\cal A}_\psi\|_{L^2\to L^2}\leq
\|\psi\|_{L^\infty(\R^d)}$ and the fact that $(u_n,v_n)$ is a
bounded sequence in $L^2(\R^d;\R^2)$.

In \cite{Ger}, the (natural) question whether it is possible to
extend the notion of $H$-measures (or microlocal defect measures in
the terminology used there) to the $L^p$ framework  is posed (see
also \cite[p. 331.]{tar_book}). We shall consider only the case
$p\in\langle1,\infty\rangle$ (i.e.~$1<p<\infty$), while its dual
exponent we consistently denote by $p'$.

To answer that question, one necessarily needs precise bounds for the Fourier
multiplier operator ${\cal A}_\psi$ as a mapping from $L^p(\R^d)$ to
$L^p(\R^d)$. The bounds are given by the famous H\"ormander-Mikhlin
theorem \cite{Oki, Gra}.


\begin{definition}
\label{mult} Let $\phi:\R^d\to \C$ satisfy $(1+|x|^2)^{-k/2}\phi\in
L^1(\R^d)$ for some $k\in \N_0$. Then $\phi$ is called the {\em Fourier
multiplier}\/ on $L^p(\R^d)$, if $\overline\F(\phi
\F(\theta))\in L^p(\R^d)$ for any $\theta\in {\cal S}(\R^d)$, and
$$
{\cal S}(\R^d)\ni\theta\mapsto \overline\F(\phi\F(\theta)) \in L^p(\R^d)
$$
can be extended to a continuous mapping $T_\phi: L^p(\R^d)\to
L^p(\R^d)$. Operator $T_\phi$ we call the {\em $L^p$-multiplier operator}\/
with {\em symbol}\/ $\phi$.
\end{definition}

\begin{theorem}[{\bf H\"ormander-Mikhlin}\/]
\label{tHM1}
Let $\phi\in L^\infty(\R^d)$
have partial derivatives of order less than or equal to $\kappa$,
where $\kappa$ is the least integer strictly greater than $d/2$
(i.e.~$\kappa=[\frac{d}{2}]+1$). If for some constant
$k>0$
\begin{equation}
\label{cond_1'} (\forall r>0)(\forall \alpha\in \N_0^d)\qquad
    |\alpha|\leq \kappa \;\Longrightarrow\;
\int_{\frac{r}{2}\leq \|\xi\|\leq
r}|D_{\xi}^{\alpha}\phi(\xi)|^2d\xi\leq k^2r^{d-2n(\alpha)}\;,
\end{equation}
then for any $p\in\oi1\infty$ and the associated multiplier operator
$T_\phi$ there exists a constant $C_d$ (depending only on the
dimension $d$; see \cite[p. 362]{Gra}) such that
\begin{equation}
\label{*} \|T_\phi\|_{L^p\to L^p}\leq  C_d p
(p-1)(k+\|\phi\|_\infty) \;.
\end{equation}
\end{theorem}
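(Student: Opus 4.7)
The plan is to follow the classical Calderón--Zygmund strategy: derive the $L^2$ bound directly from Plancherel, verify Hörmander's cancellation condition for the convolution kernel of $T_\phi$, deduce a weak $(1,1)$ endpoint via the Calderón--Zygmund decomposition, and finally pass to the full range $p\in\langle 1,\infty\rangle$ by Marcinkiewicz interpolation and duality, carefully tracking the constants.

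The $L^2$ case is immediate: for $f\in{\cal S}(\R^d)$, Plancherel gives
\[ \|T_\phi f\|_{L^2} = \|\phi\hat f\|_{L^2} \leq \|\phi\|_\infty\|f\|_{L^2}, \]
so $T_\phi$ extends by density to an operator on $L^2$ of norm at most $\|\phi\|_\infty$.

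The heart of the argument, and the main expected obstacle, is to verify that the distributional kernel $K:=\bar\F\phi$ of $T_\phi$ is represented on $\R^d\setminus\{0\}$ by a function satisfying the Hörmander condition
\[ \sup_{y\neq 0}\int_{|x|\geq 2|y|}|K(x-y)-K(x)|\,dx \leq A_d\, k . \]
For this I would introduce a smooth Littlewood--Paley partition $\phi = \sum_{j\in\Z}\phi_j$ with $\phi_j$ supported in the dyadic annulus $\{2^{j-1}\leq|\xi|\leq 2^{j+1}\}$, so that each $K_j:=\bar\F\phi_j$ is Schwartz. Plancherel translates condition \eqref{cond_1'} into scale-invariant $L^2$ estimates of the form $\int|x|^{2|\alpha|}|K_j(x)|^2\,dx \lesssim k^2\,2^{j(d-2|\alpha|)}$ for $|\alpha|\leq\kappa$. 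Applying Cauchy--Schwarz on dyadic shells in physical space with $|\alpha|=0$ and $|\alpha|=\kappa>d/2$ yields the scale-adapted $L^1$ bounds $\|K_j\|_1\lesssim k$ and $\|\nabla K_j\|_1\lesssim k\,2^j$. Summing these dyadically, with a mean-value bound on $K(x-y)-K(x)$ for scales $2^j|y|<1$ and a size bound for $2^j|y|\geq1$, produces the displayed Hörmander inequality.

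With the kernel estimate in hand, I would perform the Calderón--Zygmund decomposition of an arbitrary $f\in L^1$ at height $\lambda$, splitting $f = g + \sum_i b_i$ with $g$ bounded in $L^2$ and bad parts $b_i$ of vanishing mean on disjoint cubes $Q_i$ with $\sum|Q_i|\lesssim\|f\|_1/\lambda$; the $L^2$ bound handles $g$, while the Hörmander condition together with $\int b_i = 0$ controls $T_\phi b_i$ outside a fixed dilate of $Q_i$, yielding the weak $(1,1)$ estimate
\[ |\{|T_\phi f|>\lambda\}|\leq \frac{C_d(k+\|\phi\|_\infty)}{\lambda}\|f\|_{L^1}. \]
Marcinkiewicz interpolation between this and the $L^2$ bound produces strong $L^p$ boundedness for $1<p<2$; because $T_\phi^* = T_{\overline\phi}$ satisfies the same hypothesis with the same $k$, duality extends the bound to $2<p<\infty$. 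Finally one tracks the standard interpolation and duality constants to extract the explicit $p$-dependence stated in \eqref{*}.
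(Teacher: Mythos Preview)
The paper does not prove Theorem~\ref{tHM1}; it is quoted as a classical result with references to Okikiolu and Grafakos, and is used only as a tool in later sections. Your outline is precisely the standard Calder\'on--Zygmund argument found in those references (in particular Grafakos, Section~5.2 of the cited edition): Plancherel for $p=2$, a Littlewood--Paley dyadic decomposition of $\phi$ to verify the H\"ormander kernel condition from \eqref{cond_1'}, the Calder\'on--Zygmund decomposition to obtain the weak $(1,1)$ bound, then Marcinkiewicz interpolation and duality via $T_\phi^*=T_{\bar\phi}$. So there is nothing to compare against in the paper itself, and your approach coincides with the textbook proof the authors are invoking.

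One small caution on the final step: the explicit $p$-dependence displayed in \eqref{*} as $p(p-1)$ appears to be a typo in the paper (it vanishes as $p\to 1^+$, whereas the operator norm must blow up there); the standard constant from Marcinkiewicz and duality is of order $\max\{p,(p-1)^{-1}\}$. When you ``track the standard interpolation and duality constants'' you will recover the correct growth, not the one literally printed in \eqref{*}, so do not be alarmed by the discrepancy.
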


\begin{remark} \label{rem_stein}
It is important to notice that according to \cite[Sect.~3.2, Example
2]{Ste}, if the symbol of a multiplier is a $C^\kappa$ function defined
on the unit sphere $S^{d-1}\subseteq \R^d$, then the constant $k$ from
Theorem \ref{tHM1} can be taken to be equal to $\|\phi\|_{C^\kappa(S^{d-1})}$.
\end{remark}


By an application of Theorem \ref{tHM1}, in Section 2 we are able to
introduce the $H$-distribu\-tions (see Theorem \ref{tbasic2} below)
--- an extension of $H$-measures in the $L^p$-setting. Its proof
is the main result of the paper, and forms Section 3. We conclude Section 4 by an
$L^p$-variant of the localisation principle and a proof of an
$(L^p,L^{p'})$-variant of the div-curl lemma.


\begin{remark}
Recently, variants of $H$-measures with a different scaling were
introduced (the parabolic $H$-measures \cite{Ant2, Ant3} and the
ultra-parabolic $H$-measures \cite{HKMP}). We can apply the
procedure from this paper to extend the notion of such $H$-measures
to the $L^p$-setting in the same fashion as it is given here based on Theorem \ref{tbasic1}
for the classical $H$-measures.
\end{remark}

\section{A generalisation of $H$-measures}

We have already seen (Remark \ref{rem_nenad}) that an $H$-measure
$\mu$ corresponding to a sequence $(u_n)$ in $L^2(\R^d)$ can
describe its loss of strong compactness.
We would like to introduce a similar notion describing the loss
(at least in $L^1_{loc}$) of strong compactness for sequences weakly converging in $L^p(\R^d)$.
Our extension is motivated by the following lemma \cite[Lemma 7]{DHM} and its corollary.

\begin{lemma} \cite{DHM}
\label{DHM} For $l\in \R^+$ and $u\in \R$ denote
\begin{equation}
T_l(u)=\begin{cases} l, & u>l\\
u, & u\in [-l,l]\\
-l, & u<-l\;.
\end{cases}
\label{trunc}
\end{equation}
Assume that a sequence $(u_n)$ of measurable functions on
$\Omega\subseteq \R^d$ is such that
\begin{equation}
\label{novo-1} \sup\limits_{n\in \N}\int_{\Omega}|u_n|^p dx <\infty
\;.
\end{equation}
Suppose further that for each fixed $l>0$ the sequence of truncated functions
$(T_l(u_n))_{n}$ is precompact in $L^1(\Omega)$.
Then, there exists a measurable function $u$ such that on a
subsequence
$$
u_{n_k} \to u  \ \ \text{in measure.}
$$
\end{lemma}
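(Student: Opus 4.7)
The plan is to produce the desired subsequence by combining a diagonal extraction with a truncation estimate. First, since for each fixed $l>0$ the family $(T_l(u_n))_n$ is precompact in $L^1(\Omega)$, I would apply this successively for $l=1,2,3,\ldots$ and Cantor-diagonalise to obtain a single subsequence $(u_{n_k})$ such that $(T_l(u_{n_k}))_k$ converges in $L^1(\Omega)$ (and is therefore Cauchy there) for every positive integer $l$.

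The key pointwise observation is that on the set where both $|u_{n_k}|\le l$ and $|u_{n_j}|\le l$ the truncation acts as the identity, so for every $\eps>0$ one has the inclusion
$$
\{|u_{n_k}-u_{n_j}|>\eps\}\;\subseteq\;\{|u_{n_k}|>l\}\cup\{|u_{n_j}|>l\}\cup\{|T_l(u_{n_k})-T_l(u_{n_j})|>\eps\}\;.
$$
Writing $M:=\sup_n\|u_n\|_{L^p(\Omega)}$, which is finite by hypothesis, the Chebyshev inequality bounds the measure of each of the first two sets by $M^p/l^p$, while Chebyshev in $L^1$ yields $|\{|T_l(u_{n_k})-T_l(u_{n_j})|>\eps\}|\le\eps^{-1}\|T_l(u_{n_k})-T_l(u_{n_j})\|_{L^1(\Omega)}$. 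Given $\delta>0$, I would first fix $l$ large enough that $2M^p/l^p<\delta/2$, and then invoke the $L^1$-Cauchy property of $(T_l(u_{n_k}))_k$ at this particular level $l$ to find $k_0$ with $\|T_l(u_{n_k})-T_l(u_{n_j})\|_{L^1(\Omega)}<\eps\delta/2$ for $k,j\ge k_0$. Combined with the two Chebyshev estimates this gives $|\{|u_{n_k}-u_{n_j}|>\eps\}|<\delta$ for $k,j\ge k_0$, so $(u_{n_k})$ is Cauchy in measure, which delivers a measurable limit $u$ (along a further a.e.\ convergent subsequence, if needed) and convergence in measure to that $u$.

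The only bit of care required is the order of quantifiers: the first two error sets are uniform in $k,j$ but depend on $l$, whereas the third depends on $k,j$ only after $l$ is fixed. One must therefore select $l$ first, large enough to absorb the $L^p$-tail, and only afterwards exploit the $L^1$-precompactness at that particular threshold. I do not anticipate a deeper obstacle beyond this bookkeeping, since the two hypotheses — a uniform $L^p$-bound and $L^1$-precompactness of every truncation — together furnish exactly the two ingredients the decomposition above requires.
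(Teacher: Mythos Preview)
The paper does not actually supply a proof of this lemma: it is quoted from \cite{DHM} and stated without argument (the proof environment that follows in the paper belongs to Corollary~\ref{DHMcor}, not to the lemma). So there is no in-paper proof to compare against.

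That said, your argument is correct and is the standard one. The diagonal extraction is legitimate, the set inclusion
\[
\{|u_{n_k}-u_{n_j}|>\eps\}\subseteq\{|u_{n_k}|>l\}\cup\{|u_{n_j}|>l\}\cup\{|T_l(u_{n_k})-T_l(u_{n_j})|>\eps\}
\]
is valid since $T_l$ is the identity on $[-l,l]$, the two Chebyshev estimates are exactly right, and your ordering of the choices (first $l$ to control the $L^p$-tails uniformly in $k,j$, then $k_0$ from the $L^1$-Cauchy property at that fixed $l$) is the correct bookkeeping. Finally, completeness of the space of measurable functions under convergence in measure on $\Omega\subseteq\R^d$ gives the limit $u$; no gap there.
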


\begin{corollary}
\label{DHMcor} The subsequence in Lemma \ref{DHM} satisfies
$$
u_{n_k} \to u  \ \ \text{strongly in $L^1_{loc}(\Omega)$.}
$$
\end{corollary}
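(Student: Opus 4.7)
The plan is to combine the convergence in measure provided by Lemma \ref{DHM} with the equi-integrability that follows from the uniform $L^p$-bound \eqref{novo-1}, and then invoke Vitali's convergence theorem on bounded subsets of $\Omega$.

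More precisely, let $(u_{n_k})$ be the subsequence extracted in Lemma \ref{DHM} and let $K\subseteq \Omega$ be an arbitrary measurable set of finite Lebesgue measure (it suffices to treat $K$ a bounded open set with $\overline K\subset \Omega$ to derive the $L^1_{loc}$ statement). First I would observe that the sequence $(u_{n_k})$ restricted to $K$ is uniformly integrable: for any measurable $E\subseteq K$, H\"older's inequality together with \eqref{novo-1} gives
$$
\int_E |u_{n_k}|\,dx \leq \Bigl(\int_E 1\,dx\Bigr)^{1/p'}\Bigl(\int_\Omega|u_{n_k}|^p\,dx\Bigr)^{1/p}\leq C\,|E|^{1/p'},
$$
with $C$ independent of $k$, and since $p>1$ the right-hand side tends to $0$ as $|E|\to 0$ uniformly in $k$. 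Moreover, by Fatou's lemma, the limit $u$ belongs to $L^p(\Omega)$, hence is integrable on $K$.

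Next, on the finite-measure set $K$ the subsequence converges to $u$ in measure (this is the conclusion of Lemma \ref{DHM}, restricted to $K$). Thus the hypotheses of Vitali's convergence theorem are met: $(u_{n_k})$ is uniformly integrable on $K$, bounded in $L^1(K)$, and converges to $u$ in measure on $K$. Vitali's theorem then yields
$$
\lim_{k\to\infty}\int_K |u_{n_k}-u|\,dx = 0,
$$
which is exactly strong convergence in $L^1(K)$. Since $K$ was an arbitrary bounded subset with closure in $\Omega$, this is the claimed $L^1_{loc}(\Omega)$ convergence.

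I do not expect a genuine obstacle here: the only subtlety is the step producing equi-integrability, which is immediate from $p>1$ and H\"older. Without the uniform $L^p$-bound (for some $p>1$) the corollary would fail, since convergence in measure alone does not control integrals; it is precisely \eqref{novo-1} that upgrades the in-measure convergence from Lemma \ref{DHM} to strong $L^1_{loc}$ convergence through Vitali.
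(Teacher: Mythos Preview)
Your argument is correct and rests on the same two ingredients as the paper's proof: Fatou's lemma to place $u$ in $L^p(\Omega)$, and the uniform $L^p$-bound combined with H\"older to control integrals over sets of small measure. The only difference is packaging: you invoke Vitali's convergence theorem as a black box, whereas the paper carries out the splitting by hand, decomposing $\int_K|u_{n_k}-u|\,dx$ according to whether $|u_{n_k}-u|$ exceeds a shrinking threshold and bounding each piece separately via H\"older and the finiteness of $m(K)$. Your route is arguably cleaner, since citing Vitali sidesteps the bookkeeping of synchronising the threshold with the convergence-in-measure statement; the paper's explicit estimate, on the other hand, makes the mechanism visible without appealing to a named theorem. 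Substantively the two arguments are the same.
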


\begin{proof}
By the Lieb form of Fatou's lemma we conclude that $u\in L^p(\Omega)$. Furthermore,
for any compact $K\subseteq\Omega$ on the limit $k\to \infty$ we have
\begin{align*}
\int_{K}\!|u_{n_k}-u|dx
& = \int_{\{|u_{n_k}-u|>1/k\}\cap K}\!|u_{n_k}-u|dx
    + \int_{\{|u_{n_k}-u|\leq 1/k\}\cap K}\!|u_{n_k}-u|dx\\
&\leq \left({m}\big(\{|u_{n_k}-u|>1/k\}\cap
K\big)\right)^{p'}\!\int_{K}\!|u_{n_k}-u|^p dx
    + m(K)/k\longrightarrow 0\;.
\end{align*}
\vskip-3mm

\end{proof}






From Corollary \ref{DHMcor} we see that if we want to analyse the
strong $L^1_{loc}$ compactness for a sequence $(u_n)$ weakly
converging to zero in $L^p(\R^d)$, it is enough to inspect how the
truncated sequences $(v_{n,l})_{n}:=(T_l(u_n))_{n}$ behave.
Furthermore, notice that it is not enough to consider
$(v_{n,l})_{n,l}$ independently of $(u_n)$ since this would force us
to estimate $u_n-v_{n,l}$, which is usually not easy. For instance,
consider a sequence $(u_n)$ weakly converging to zero in
$L^p(\R^d)$, and solving the following family of problems:
\begin{equation}
\label{ns31} \sum\limits_{i=1}^d
\pa_{x_i}\left(A_i(x)u_n(x)\right)=f_n(x),
\end{equation}
where $A_i\in C_0(\R^d)$ and $f_n\to 0$ strongly in the Sobolev
space $H^{-1}(\R^d)$.
When dealing with the latter equation it is standard to
multiply \eqref{ns31} by ${\cal A}_{\frac{\psi}{|\xi|}}(\phi u_n)$,
for $\phi\in C_0(\R^d)$, where ${\cal A}_{\frac{\psi}{|\xi|}}$ is the
multiplier operator with symbol $\frac{\psi(\xi/|\xi|)}{|\xi|}$,
$\psi\in C(S^{d-1})$, and then pass
to the limit (see e.g.~\cite{Ant, Sazh}). If $u_n\in
L^2(\R^d)$, we can apply the classical $H$-measures to describe the
defect of compactness for $(u_n)$.

If we instead take $u_n\in L^p(\R^d)$, for $p<2$, we can try to
rewrite \eqref{ns31} in the form
\begin{equation*}
\sum\limits_{i=1}^d
\pa_{x_i}\left(A_i(x)T_l(u_n)(x)\right)=f_n(x)+\sum\limits_{i=1}^d
\pa_{x_i}\left(A_i(x)(T_l(u_n)(x)-u_n(x))\right),
\end{equation*} and, similarly as before, to
multiply \eqref{ns31} by ${\cal A}_{\frac{\psi}{|\xi|}}(\phi T_l(u_n))$.
Unfortunately, we are not
able to control the right-hand side of such an expression and we
need to change the strategy. In view of these considerations,
we formulate the following theorem.

\begin{theorem}
\label{tbasic2} If $u_n\dscon 0$ in $L^{p}({\R}^{d})$ and $v_n\dscon
v$ in $L^q(\R^{d})$ for $q \geq p'$, then there exist subsequences
$(u_{n'})$, $(v_{n'})$ and a complex valued distribution $\mu\in
{\cal D}'(\R^{d} \times S^{d-1})$ of order not more than
$\kappa=[d/2]+1$, such that for every $\varphi_1, \varphi_2\in
C_c(\R^{d})$ and $\psi\in C^{\kappa}(S^{d-1})$ we have:
\begin{equation}
\label{basic111}
\begin{split}
\lim\limits_{n'\to \infty}\!\int_{\R^{d}}\!{{\cal
A}_{\psi}(\varphi_1 u_{n'})(x)}\overline{(\varphi_2
v_{n'})(x)}dx&\!=\!\lim\limits_{n'\to
\infty}\!\int_{\R^{d}}\!(\varphi_1 u_{n'})(x)\overline{{\cal
A}_{\overline{\psi}}(\varphi_2 v_{n'})(x)}dx\\&=\langle\mu
,\varphi_1\overline\varphi_2\psi \rangle,
\end{split}
\end{equation}
where ${\cal A}_{\psi}:L^p(\R^d)\to L^p(\R^d)$ is a multiplier
operator with symbol $\psi\in C^\kappa(S^{d-1})$.

\end{theorem}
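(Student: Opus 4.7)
The plan is to regard the left-hand side of \eqref{basic111} as a trilinear form $L_n(\varphi_1,\varphi_2,\psi)$ on $C_c(\R^d)\times C_c(\R^d)\times C^\kappa(S^{d-1})$, derive a uniform-in-$n$ continuity estimate, extract a convergent subsequence via a diagonal argument, verify that the two integrals coincide via an adjoint identity, and finally show that the limit descends to a bilinear form on $C_c(\R^d)\times C^\kappa(S^{d-1})$ that extends to the claimed distribution $\mu$.

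First I would combine H\"older's inequality with Theorem~\ref{tHM1} and Remark~\ref{rem_stein}. Since $\varphi_2\in C_c(\R^d)$ has compact support $K$ and $(v_n)$ is bounded in $L^q(\R^d)$ with $q\ge p'$, H\"older gives $\|\varphi_2 v_n\|_{L^{p'}}\le m(K)^{1/p'-1/q}\|\varphi_2\|_{L^\infty}\|v_n\|_{L^q}$, uniformly in $n$; H\"ormander--Mikhlin provides $\|\mathcal{A}_\psi(\varphi_1 u_n)\|_{L^p}\le C_{d,p}\|\psi\|_{C^\kappa(S^{d-1})}\|\varphi_1\|_{L^\infty}\|u_n\|_{L^p}$. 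Combining,
\begin{equation*}
|L_n(\varphi_1,\varphi_2,\psi)|\le C\,\|\varphi_1\|_{L^\infty}\,\|\varphi_2\|_{L^\infty}\,\|\psi\|_{C^\kappa(S^{d-1})},
\end{equation*}
with $C$ depending only on $\supp\varphi_2$ and the uniform $L^p$, $L^q$ bounds on the two sequences. Using separability of $C_c(\R^d)$ and $C^\kappa(S^{d-1})$, a Cantor diagonal extraction then produces a subsequence $(n')$ along which $L_{n'}$ converges on a countable dense family, and the uniform estimate promotes this to convergence for every admissible triple, with trilinear continuous limit $L$. The equality of the two integrals in \eqref{basic111} follows from the adjoint relation $\int(\mathcal{A}_\psi f)\overline g\,dx=\int f\,\overline{\mathcal{A}_{\overline\psi}g}\,dx$, valid on $\mathcal{S}(\R^d)$ by Plancherel and extending to $f\in L^p$, $g\in L^{p'}$ by density, Theorem~\ref{tHM1} guaranteeing the required continuity of both multipliers.

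The decisive step --- and what I expect to be the main obstacle --- is to show that $L(\varphi_1,\varphi_2,\psi)$ depends only on the product $\varphi_1\overline{\varphi_2}\psi$, so that it defines a bilinear form on $C_c(\R^d)\times C^\kappa(S^{d-1})$. The natural tool is an $L^p$ analogue of Tartar's First Commutation Lemma asserting that for $\varphi_1\in C_c(\R^d)$ and $\psi\in C^\kappa(S^{d-1})$ the commutator $[\mathcal{A}_\psi,m_{\varphi_1}]$ (with $m_{\varphi_1}$ denoting multiplication by $\varphi_1$) is compact on $L^p(\R^d)$; by reflexivity this is equivalent to sending weakly null sequences to strongly null ones. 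Granting this, the decomposition $\mathcal{A}_\psi(\varphi_1 u_n)=\varphi_1\mathcal{A}_\psi(u_n)+[\mathcal{A}_\psi,m_{\varphi_1}]u_n$ paired with the bounded $\overline{\varphi_2 v_n}\in L^{p'}$ yields
\begin{equation*}
L(\varphi_1,\varphi_2,\psi)=\lim_{n'}\int_{\R^d}\varphi_1\overline{\varphi_2}\,\mathcal{A}_\psi(u_{n'})\,\overline{v_{n'}}\,dx,
\end{equation*}
an expression that manifestly depends only on $\varphi_1\overline{\varphi_2}$ and $\psi$. Writing any $\phi\in C_c(\R^d)$ as $\phi=\varphi_1\overline{\varphi_2}$ (e.g.\ $\varphi_1=\phi$ and $\varphi_2$ a real cutoff identically one on $\supp\phi$) gives a well-defined continuous bilinear form $B(\phi,\psi)$; since tensor products $\phi\otimes\psi$ span a dense subspace of $\mathcal{D}(\R^d\times S^{d-1})$, $B$ extends uniquely to a distribution $\mu\in\mathcal{D}'(\R^d\times S^{d-1})$ of order zero in $x$ and at most $\kappa$ in $\xi$, hence of total order at most $\kappa$. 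The substantive technical work thus lies in the commutator compactness in $L^p$; it should be accessible by adapting the Fr\'echet--Kolmogorov based proof of Tartar's $L^2$ commutation lemma, exploiting the $C^\kappa(S^{d-1})$ regularity of $\psi$ to control the kernel oscillations of $[\mathcal{A}_\psi,m_{\varphi_1}]$.
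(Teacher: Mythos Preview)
Your overall architecture matches the paper's: the adjoint identity for the first equality, a uniform H\"older/H\"ormander--Mikhlin bound, diagonal extraction, a commutation argument reducing the dependence to the product $\varphi_1\overline\varphi_2$, and the Schwartz kernel theorem to obtain $\mu$. The substantive difference is in the commutation step, which you rightly flag as the crux.

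You commute on the $u_n$ side and need compactness of $[\mathcal{A}_\psi, m_{\varphi_1}]$ on $L^p(\R^d)$. This is true, but your proposed Fr\'echet--Kolmogorov adaptation is not immediate (Tartar's $L^2$ argument relies on the Plancherel bound $\|\mathcal{A}_\psi\|_{L^2\to L^2}\le\|\psi\|_{L^\infty}$, which has no direct $L^p$ analogue); a cleaner justification would be interpolation of compactness \`a la Krasnosel'ski\u{\i}/Cwikel between Tartar's $L^2$ result and H\"ormander--Mikhlin boundedness on nearby $L^r$ spaces. The paper avoids this entirely by commuting on the $v_n$ side instead: working with $\int \varphi_1 u_n\,\overline{\mathcal{A}_{\overline\psi}(\varphi_2 v_n)}\,dx$, it subtracts the weak limit $v$, inserts the characteristic function $\chi_l$ of a compact $K_l\supseteq\supp\varphi_2$, and applies the commutator to $\chi_l(v_n-v)$, a sequence bounded simultaneously in $L^2$ and $L^\infty$ (the argument is written with $q=\infty$ in mind, cf.~the acknowledgement). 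Their elementary Lemma~\ref{scl} then gives $C(\chi_l(v_n-v))\to 0$ strongly in every $L^{r}$, $2\le r<\infty$, by interpolating $L^2$ compactness (Tartar's lemma used as a black box) against $L^r$ boundedness for large $r$; full $L^p$ compactness of the commutator is never invoked. The paper also reverses your order of operations: it performs the commutation first, obtaining genuinely bilinear forms $\mu_{n,l}$ on $C_{K_l}(\R^d)\times C^\kappa(S^{d-1})$, and only then runs the diagonal extraction (their Lemma~\ref{bil}), nested over the exhaustion $\R^d=\bigcup_l K_l$; this also handles cleanly the dependence of your constant $C$ on $\supp\varphi_2$, which your sketch passes over.
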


We call the functional $\mu$ the {\em H-distribution} corresponding
to (a subsequence of) $(u_n)$ and $(v_n)$.

\begin{remark}
Notice that, unlike to what was the case with $H$-measures, it
is not possible to write \eqref{basic111} in a form similar to
\eqref{ns30} since, according to the Hausdorff-Young inequality,
$\|\F(u)\|_{L^{p'}(\R^d)}\leq C \|u\|_{L^{p}(\R^d)}$
only if $1<p<2$. This means that we are not able to estimate
$\|\F(\varphi_2 v_n)\|_{L^{q}(\R^d)}$, $q>2$, which would appear
from \eqref{basic111} when rewriting it in a form similar to
\eqref{ns30}.
\end{remark}

\section{Proof of Theorem \ref{tbasic2}}

In order to prove the theorem, we need a consequence of Tartar's
First commutation lemma \cite[Lemma 1.7]{Tar}. First, for $a\in
C^{\kappa}(S^{d-1})$ and $b\in C_0(\R^d)$ define the Fourier
multiplier operator ${\cal A}_\psi$ and the operator of multiplication
$B$ on $L^p(\R^d)$, by the formulae:
\begin{align}
\label{oper_1}
&\F({\cal A}_\psi u)(\xi)=\psi\Bigl(\frac{\xi}{|\xi|}\Bigr)\F(u)(\xi) \ ,\\
\label{oper_2} &Bu(x)=b(x)u(x) \ .
\end{align}
Notice that $\psi$ satisfies the conditions of the
H\"{o}rmander-Mikhlin theorem and that $\|{\cal A}_{\psi}\|_{L^p\to
L^p}\leq C \|\psi\|_{C^\kappa}$ (see Remark \ref{rem_stein}).
Therefore, ${\cal A}_\psi$ and $B$ are bounded operators on
$L^p(\R^d)$, for any $p\in\oi1\infty$. We are interested in the
properties of their commutator, $C={\cal A}_\psi B-B{\cal A}_\psi$.

\begin{lemma}
\label{scl} Let $(v_n)$ be bounded in both $L^2(\R^d)$ and
$L^\infty(\R^d)$, and such that $v_n\rightharpoonup 0$ in the sense
of distributions. Then the sequence $(Cv_n)$ strongly converges to
zero in $L^q(\R^d)$, for any $q\in\zoi2\infty$.
\end{lemma}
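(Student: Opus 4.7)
The plan is to reduce the lemma to Tartar's First Commutation Lemma in $L^2$ and then interpolate to get $L^q$ convergence for $q>2$.

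First, observe that by the $L^2 \cap L^\infty$ bound and interpolation, $(v_n)$ is bounded in $L^r(\R^d)$ for every $r\in[2,\infty]$, and since $\psi\in C^\kappa(S^{d-1})$ is a Hörmander--Mikhlin multiplier (Remark \ref{rem_stein}) while $b\in C_0(\R^d)$ induces a bounded multiplication operator on every $L^r$, the commutator $C=\mathcal{A}_\psi B - B\mathcal{A}_\psi$ is bounded on $L^r(\R^d)$ for every $r\in\oi1\infty$. In particular, $(Cv_n)$ is bounded in $L^r(\R^d)$ for every $r\in[2,\infty)$.

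Second, by Tartar's First Commutation Lemma \cite[Lemma 1.7]{Tar}, for $\psi\in C(S^{d-1})$ and $b\in C_0(\R^d)$ the operator $C$ is compact on $L^2(\R^d)$. Since $v_n$ is bounded in $L^2$ and converges to $0$ in the sense of distributions, and since $C_c^\infty(\R^d)$ is dense in $L^2(\R^d)$, the sequence $v_n$ in fact converges weakly to $0$ in $L^2(\R^d)$. Compactness of $C$ on $L^2$ then yields $Cv_n\to 0$ strongly in $L^2(\R^d)$.

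Third, for $q\in\oi2\infty$, fix any $r>q$ and write $\frac{1}{q}=\frac{\theta}{2}+\frac{1-\theta}{r}$ for a suitable $\theta\in\oi01$. The logarithmic convexity of $L^p$ norms yields
\begin{equation*}
\|Cv_n\|_{L^q(\R^d)} \leq \|Cv_n\|_{L^2(\R^d)}^{\theta}\,\|Cv_n\|_{L^r(\R^d)}^{1-\theta}.
\end{equation*}
The second factor is uniformly bounded in $n$ by the first step, while the first factor tends to $0$ by the second step. The case $q=2$ is already covered, which concludes the argument.

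The only substantive issue is the $L^2$-compactness of $C$, which is precisely the content of Tartar's lemma; once that is invoked, the rest is standard interpolation and boundedness bookkeeping, so there is no real obstacle left.
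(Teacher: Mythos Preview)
Your proof is correct and follows essentially the same route as the paper: invoke Tartar's First Commutation Lemma for the $L^2$ compactness of $C$, then interpolate between the resulting strong $L^2$ convergence and the uniform $L^r$ bound coming from H\"ormander--Mikhlin boundedness of $\mathcal{A}_\psi$. If anything, you are slightly more explicit than the paper in justifying that distributional convergence plus the $L^2$ bound upgrades to weak $L^2$ convergence before applying compactness.
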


\begin{proof}
First, notice that we do not have the boundedness of ${\cal A}_\psi$ on
$L^\infty$, but only on $L^p$, for $p<\infty$.
Therefore we take $p\in\oi q\infty$, and by the classical interpolation
inequality conclude that $(v_n)$ is bounded in $L^p$.
Now we can apply the same inequality again:
\begin{equation}
\label{pro_1} \|Cv_n\|_{q}\leq
\|Cv_n\|^\alpha_{2}\|Cv_n\|^{1-\alpha}_{p},
\end{equation}
for $\alpha\in\oi01$ such that $1/q=\alpha/2+(1-\alpha)/p$. As $C$ is
a compact operator on $L^2(\R^d)$ by the First commutation lemma,
while $C$ is bounded on $L^p(\R^d)$, from \eqref{pro_1} we get the
claim.
\end{proof}

\noindent{\bf Proof of Theorem \ref{tbasic2}:} The first equality
from \eqref{basic111} follows from the fact that the adjoint
operator ${\cal A}^*_{\psi}$ corresponding to ${\cal A}_\psi$ is
actually the multiplier operator ${\cal A}_{\bar{\psi}}$ (see
\cite[Theorem 7.4.3]{Oki}). This means that (we take the duality
product to be sesquilinear, i.e.~antilinear in the second variable,
in order to get the scalar product when $p=p'=2$)
$$
_{L^p} \langle {\cal A}_\psi(\varphi_1 u_{n'}),\varphi_2 v_{n'}
\rangle_{L^{p'}}=_{L^p}\langle \varphi_1 u_{n'}, {\cal
A}_{\overline{\psi}}(\varphi_2 v_{n'}) \rangle_{L^{p'}},
$$ which is exactly what we need.
We can now concentrate our attention to the second equality in \eqref{basic111}.

Since $u_{n}\rightharpoonup 0$ in $L^{p}(\R^{d})$, while for $v\in
L^\infty(\R^d)$ we have $\varphi_1{{\cal A}_{\psi}(\varphi_2 v)}\in
L^{p'}(\R^{d})$, according to the H\"ormander-Mikhlin theorem for
any $\varphi_1, \varphi_2\in C_c(\R^{d})$ and $\psi\in
C^\kappa(S^{d-1})$, it follows that
\begin{equation}
\label{novo1} \lim\limits_{n\to\infty}\int_{\R^{d}}\varphi_1u_{n}
    \overline{{\cal A}_{\overline\psi}(\varphi_2v)}dx= 0. \nonumber
\end{equation}

We can write $\R^d=\bigcup_{l\in\N} K_l$, where $K_l$ form an
increasing family of compact sets (e.g.~closed balls around the
origin of radius $l$); therefore $\supp\varphi_2\subseteq K_l$ for
some $l\in \N$. We have:
\begin{align*}
\label{*3} \lim\limits_{n\to
\infty}\int_{\R^{d}}\varphi_1u_{n}\overline{{\cal A}_{\overline\psi}(\varphi_2
v_{n})}dx
&= \lim\limits_{n\to \infty}\int_{\R^{d}}\varphi_1u_{n}\overline{{\cal A}_{\overline\psi}[\varphi_2 \chi_{l}(v_{n}-v)]}dx \\
&= \lim\limits_{n\to \infty}\int_{\R^{d}}\varphi_1\overline\varphi_2  u_{n}\overline{{\cal A}_{\overline\psi}(\chi_{l}( v_{n}-v))}dx \\
&= \lim\limits_{n\to \infty}\int_{\R^{d}}\varphi_1\overline\varphi_2 u_{n}\overline{{\cal A}_{\overline\psi}( \chi_{l} v_{n})}dx,
\end{align*}
where $\chi_l$ is the characteristic function of $K_l$. In the
second equality we have used Lemma \ref{scl}.

This allows us to express the above integrals as bilinear
functionals, after denoting $\varphi= \varphi_1\overline\varphi_2$:
\begin{equation}
\label{*4} \mu_{n,l}(\varphi,\psi)\!=\! \int_{\R^{d}}\!\!\varphi
u_{n}\overline{{\cal A}_{\overline\psi}(\chi_{l} v_{n})}dx.
\end{equation}
Furthermore, $\mu_{n,l}$ is bounded by
$\tilde{C}\|\varphi\|_{C_0(\R^{d})} \|{\psi}\|_{C^\kappa(S^{d-1})}$,
as according to the H\"{o}lder inequality and Remark
\ref{rem_stein}:
\begin{align*}
\label{*09} \big|   \mu_{n,l}(\varphi,\psi) \big|
 \leq \|\varphi u_{n}\|_{p}\|{\cal A}_\psi(\chi_{l} v_{n})\|_{p'}
&\leq \tilde{C} \| \psi\|_{C^\kappa(S^{d-1})}
\|\varphi\|_{C_0(\R^{d})},
\end{align*}
where the constant $\tilde{C}$ depends on $L^p(K_l)$-norm and
$L^{p'}(K_l)$-norm of the sequences $(u_n)$ and $(v_n)$,
respectively.

For each $l\in\N$ we can apply Lemma \ref{bil} below to obtain
operators $B^l\in {\cal L}(C_{K_l}(\R^d); (C^\kappa(S^{d-1}))')$.
Furthermore, for the construction of $B^l$, we can  start with a
defining subsequence for $B^{l-1}$, so that the convergence will
remain valid on $C_{K_{l-1}}(\R^d)$, in such a way obtaining that
$B^l$ is an extension of $B^{l-1}$.

This allows us to define the operator $B$ on $C_c(\R^d)$: for
$\varphi\in C_c(\R^d)$ we take $l\in\N$ such that
$\supp\varphi\subseteq K_l$, and set $B\varphi:=B^l\varphi$. Because
of the above mentioned extension property, this definition is good,
and we have a bounded operator:
$$
\|B\varphi\|_{(C^\kappa(S^{d-1}))'} \le \tilde C
\|\varphi\|_{C_0(\R^d)} \;.
$$

In such a way we got a bounded linear operator $B$ on the space
$C_c(\R^d)$ equipped with the uniform norm; the operator can be
extended to its completion, the Banach space $C_0(\R^d)$.

Now we can define $\mu(\varphi,\psi):= \langle
B\varphi,\psi\rangle$, which satisfies \eqref{basic111}.

We can restrict $B$ to an operator $\tilde B$ defined only on
$C^\infty_c(\R^d)$; as the topology on $C^\infty_c(\R^d)$ is
stronger than the one inherited from $C_0(\R^{d})$, the restriction
remains continuous. Furthermore, $(C^\kappa(S^{d-1}))'$ is the space
of distributions of order $\kappa$, which is a subspace of ${\cal
D}'(S^{d-1})$. In such a way we have a continuous operator from
$C^\infty_c(\R^d)$ to ${\cal D}'(S^{d-1})$, which by the Schwartz
kernel theorem can be identified to a distribution from ${\cal
D}'(\R^d\times S^{d-1})$ (for details cf.~\cite[Ch.~VI]{HormB}).
\hfill $\Box$
\medskip

We conclude this section by a simple lemma and its proof, which was
used in the proof of Theorem \ref{tbasic2}.

\begin{lemma}
\label{bil} Let $E$ and $F$ be separable Banach spaces, and $(b_n)$
an equibounded sequence of bilinear forms on $E\times F$ (more
precisely, there is a constant $C$ such that for each $n\in\N$ we
have $|b_n(\ph, \psi)| \le C\|\ph\|_E\|\psi\|_F$).

Then there exists a subsequence $(b_{n_k})$ and a bilinear form $b$
(with the same bound $C$) such that
$$
(\forall\ph\in E)(\forall\psi\in F)\qquad \lim_k b_{n_k}(\ph,\psi) =
b(\ph,\psi)\;.
$$
\end{lemma}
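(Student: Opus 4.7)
The plan is a standard Cantor diagonalisation combined with an $\eps/3$ extension argument, made possible by the uniform bound $C$. The separability hypothesis is used precisely to reduce testing convergence to countably many pairs.

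First, I would fix countable dense subsets $\{\ph_i : i\in\N\}\subseteq E$ and $\{\psi_j : j\in\N\}\subseteq F$. For each pair $(i,j)$, the scalar sequence $\bigl(b_n(\ph_i,\psi_j)\bigr)_n$ is bounded by $C\|\ph_i\|_E\|\psi_j\|_F$, hence by Bolzano--Weierstrass admits a convergent subsequence. Enumerating the countably many pairs $(i,j)$ and performing the usual Cantor diagonal extraction, I obtain a single subsequence $(n_k)$ such that the limit
$$
\beta(\ph_i,\psi_j):=\lim_{k}b_{n_k}(\ph_i,\psi_j)
$$
exists for every $(i,j)\in\N\times\N$. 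By linearity of each $b_{n_k}$ in each variable, $\beta$ extends uniquely to a bilinear form on the $\Q$-linear (or $\C$-rational-linear) spans of $\{\ph_i\}$ and $\{\psi_j\}$, still satisfying the uniform estimate.

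Next, I would show that $\bigl(b_{n_k}(\ph,\psi)\bigr)_k$ is Cauchy for every $(\ph,\psi)\in E\times F$. Given $\eps>0$, pick $\ph_i, \psi_j$ with $\|\ph-\ph_i\|_E<\eps$ and $\|\psi-\psi_j\|_F<\eps$. For any $k,m$,
$$
\bigl| b_{n_k}(\ph,\psi)-b_{n_m}(\ph,\psi)\bigr|
\le \bigl|b_{n_k}(\ph-\ph_i,\psi)\bigr|
+\bigl|b_{n_k}(\ph_i,\psi-\psi_j)\bigr|
+\bigl|(b_{n_k}-b_{n_m})(\ph_i,\psi_j)\bigr|
$$
$$
+\bigl|b_{n_m}(\ph_i,\psi-\psi_j)\bigr|
+\bigl|b_{n_m}(\ph-\ph_i,\psi)\bigr|,
$$
and the first, second, fourth and fifth summands are each bounded by $C\eps\cdot(\|\psi\|_F+\|\ph_i\|_E+\|\ph\|_E)$ uniformly in $k,m$, while the middle term vanishes as $k,m\to\infty$ by construction. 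This proves the Cauchy property, so I can define $b(\ph,\psi):=\lim_k b_{n_k}(\ph,\psi)$.

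Finally, $b$ is bilinear because pointwise limits preserve linearity in each argument separately, and the estimate $|b(\ph,\psi)|\le C\|\ph\|_E\|\psi\|_F$ is obtained by passing to the limit in $|b_{n_k}(\ph,\psi)|\le C\|\ph\|_E\|\psi\|_F$. The only potential subtlety --- and it is not really an obstacle here --- is that one must handle both variables of the bilinear form simultaneously in the $\eps/3$ step; this is why I split the difference through the intermediate value $b_{n_k}(\ph_i,\psi_j)$ (which is controlled by the diagonal extraction) and use the uniform bound to absorb the four density errors. No appeal to the projective tensor product is needed, though one could alternatively identify each $b_n$ with an element of $(E\hat\otimes_\pi F)'$ and invoke sequential weak-$*$ compactness on the dual of a separable Banach space.
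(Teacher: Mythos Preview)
Your proof is correct. It differs from the paper's only in packaging: the paper identifies each $b_n$ with a bounded operator $B_n\colon E\to F'$ via $\dup{F'}{B_n\ph}\psi F = b_n(\ph,\psi)$, then for each $\ph$ in a countable dense subset of $E$ invokes the sequential Banach--Alaoglu theorem (the separability of $F$ making closed balls of $F'$ weak-$*$ sequentially compact) to extract a weak-$*$ convergent subsequence of $(B_n\ph)$, diagonalises over those countably many $\ph$, and finally extends by density in $E$ alone. You instead treat the two variables symmetrically, diagonalising over a countable dense set of pairs in $E\times F$ using only Bolzano--Weierstrass for scalars, and then run the density extension in both variables at once via your five-term Cauchy estimate. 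Your route is marginally more elementary, since it sidesteps the functional-analytic compactness statement; the paper's route is slightly slicker in that weak-$*$ compactness absorbs the second variable in one stroke. Substantively the two arguments are the same diagonal-plus-density scheme, and your closing remark about $(E\hat\otimes_\pi F)'$ is a third equivalent repackaging.
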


\begin{proof}
To each $b_n$ we associate a bounded linear operator $B_n: E
\longrightarrow F'$ by
$$
\dup{F'}{B_n\ph}\psi F := b_n(\ph,\psi) \;.
$$
The above expression clearly defines a function (i.e.~$B_n\ph\in F'$
is uniquely determined), it is linear in $\ph$, and bounded:
$$
\nor{B_n\ph}{F'} = \sup_{\psi\ne0}{|b_n(\ph,\psi)|\over \nor\psi F}
\le C\nor\ph E \;.
$$
Let $\lG\subseteq E$ be a countable dense subset; for each
$\ph\in\lG$ the sequence $(B_n\ph)$ is bounded in $F'$, so by the
Banach-Alaoglu-Bourbaki theorem there is a subsequence such that
$$
B_{n_1}\ph \povrhsk\ast \beta_1 =: B(\ph) \;.
$$
By repeating this construction countably many times, and then
applying the Cantor diagonal procedure we get a subsequence
$$
(\forall \ph\in\lG)\qquad B_{n_k}\ph \povrhsk\ast B(\ph) \;,
$$
such that $\nor{B(\ph)}{F'} \le C\nor\ph E$.

Then it is  standard to extend $B$ to a bounded linear operator on
the whole space $E$. Clearly:
$$
b(\ph,\psi):=\dup{F'}{B\ph}\psi F = \lim_k \dup{F'}{B_{n_k}\ph}\psi
F = \lim_k b_{n_k}(\ph,\psi)\;.
$$
\end{proof}

\section{Some applications}

In applications quite often it is needed to prove that a weakly convergent
sequence is, at the same time, strongly convergent (see e.g.
\cite{MA1, chen, Pan, Sazh}). In view of Corollary \ref{DHMcor}, in
order to prove strong $L^1_{loc}$ convergence of a weakly
convergent sequence, the given version of Theorem \ref{tbasic2} is
sufficient. Indeed, assume that $u_n \rightharpoonup0$ in
$L^p(\R^d)$. Denote $v^l_n=T_l(u_n)$ and assume that we are able to
prove that the $H$-distribution $\mu^l$ corresponding to
subsequences $(u_{n'})$ and $(v^l_{n'})$ is identically equal to
zero for each $l\in\N$. In that case, taking $\psi=1$,
$\varphi_1=\varphi_2=\varphi$ in \eqref{basic111}, we have:
\begin{align*}
0=\lim\limits_{n'\to \infty}\int_{\R^{d}}\varphi u_{n'}{{\cal
A}_{1}(\varphi v^l_{n'})}dx
& =  \lim\limits_{n'\to \infty}\int_{\R^{d}}\varphi^2 u_{n'} T_l(u_{n'})dx \\
& \geq   \lim\limits_{n'\to \infty}\int_{\R^{d}}\varphi^2
|T_l(u_{n'})|^2dx\;.
\end{align*}
This implies that for any fixed $l\in\N$ we have $v^l_{n'}\longrightarrow 0$ strongly $L^2_{loc}$,
implying the same convergence in $L^1_{loc}$. Now by Corollary
\ref{DHMcor} we conclude that $u_n\longrightarrow 0$ in $L^1_{loc}$.
Comparing the latter to Remark \ref{rem_nenad}, we see that
$H$-distributions are a proper generalisation of $H$-measures.
Actually, the following localisation principle holds (see also
\cite[Theorem 1.6]{Tar} and \cite[Theorem 2]{Ant}).

\begin{theorem}\label{th_loc}
Consider \eqref{ns31}, under the assumptions that
$u_n\rightharpoonup 0$ in $L^p(\R^d)$, and $f_n\to0$ in
$W^{-1,q}(\R^d)$, for some $q\in\oi1d$. Take an arbitrary sequence
$(v_n)$ bounded in $L^\infty(\R^d)$, and by $\mu$ denote the
$H$-distribution corresponding to some subsequences of sequences
$(u_n)$ and $(v_n)$. Then
\begin{equation}
\label{pro_7} \sum\limits_{i=1}^dA_i(x)\xi_i \mu(x,\xi)=0 \ ,
\end{equation}
in the sense of distributions on $\R^d\times S^{d-1}$,
the function $(x,\xi)\mapsto \sum\limits_{i=1}^dA_i(x)\xi_i$ being the symbol
of the linear partial differential operator with $C^\kappa_0$ coefficients.
\end{theorem}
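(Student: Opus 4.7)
The plan is to apply a Fourier multiplier that formally inverts the gradient to the equation \eqref{ns31}, pair the resulting identity with $\overline{\varphi v_n}$, and then extract the $H$-distribution $\mu$ via Theorem \ref{tbasic2}.

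First I would fix $\psi\in C^\kappa(S^{d-1})$ and introduce the symbol $\sigma(\xi):=\psi(\xi/|\xi|)/(2\pi i|\xi|)$. The key observation is that composing $\mathcal{A}_\sigma$ with $\partial_{x_j}$ yields a Fourier multiplier with the $0$-homogeneous symbol $\xi_j\psi(\xi/|\xi|)/|\xi|$, whose restriction to $S^{d-1}$ is the $C^\kappa$ function $\xi_j\psi(\xi)$; by H\"ormander--Mikhlin (Theorem \ref{tHM1} together with Remark \ref{rem_stein}) this composite operator is bounded on every $L^r(\R^d)$, $r\in\oi1\infty$. Using the Riesz-potential decomposition to write $f_n=\sum_j\partial_{x_j}g_{j,n}$ with $g_{j,n}\to 0$ strongly in $L^q(\R^d)$, and applying $\mathcal{A}_\sigma$ to \eqref{ns31}, I obtain the identity
\begin{equation*}
\sum_{i=1}^d\mathcal{A}_{\xi_i\psi(\xi/|\xi|)/|\xi|}(A_iu_n)=\sum_{j=1}^d\mathcal{A}_{\xi_j\psi(\xi/|\xi|)/|\xi|}(g_{j,n})\;,
\end{equation*}
with the right-hand side converging strongly to zero in $L^q(\R^d)$.

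Next, for any $\varphi\in C_c(\R^d)$ I would multiply both sides by $\overline{\varphi v_n}$ and integrate. Since $(v_n)$ is bounded in $L^\infty$ and $\varphi$ is compactly supported, $\varphi v_n$ is bounded in $L^{q'}(\R^d)$, so Hölder's inequality forces the right-hand side to zero. For the left-hand side I would invoke Theorem \ref{tbasic2} with the first test function extended to $C_0(\R^d)$ (as permitted by the $C_0$-completion of the bilinear form $\mu_{n,l}$ constructed in its proof), taking $\varphi_1=A_i\in C_0(\R^d)$ and $\varphi_2=\varphi$. Since the multiplier symbol restricts on $S^{d-1}$ to $\xi_i\psi(\xi)$, along the subsequence selected by Theorem \ref{tbasic2} I get
\begin{equation*}
\int_{\R^d}\mathcal{A}_{\xi_i\psi(\xi/|\xi|)/|\xi|}(A_iu_{n'})\overline{\varphi v_{n'}}\,dx\longrightarrow\langle\mu,\,A_i\overline\varphi\,\xi_i\psi\rangle\;.
\end{equation*}
Summing in $i$ and equating limits yields $\langle\mu,\,\overline\varphi(x)\sum_iA_i(x)\xi_i\psi(\xi)\rangle=0$ for every $\varphi\in C_c(\R^d)$ and $\psi\in C^\kappa(S^{d-1})$, and by the density of tensor products $\overline\varphi(x)\psi(\xi)$ in $\mathcal{D}(\R^d\times S^{d-1})$ this is precisely \eqref{pro_7}.

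The main obstacle I anticipate is the rigorous justification of the multiplier calculus: the symbol $1/|\xi|$ is singular at the origin, so $\mathcal{A}_\sigma$ by itself does not satisfy the H\"ormander--Mikhlin hypotheses, and one must verify both the composition identity $\mathcal{A}_\sigma\circ\partial_{x_j}=\mathcal{A}_{\xi_j\psi(\xi/|\xi|)/|\xi|}$ and the $L^q$-boundedness of the resulting operator through the Riesz-potential / Riesz-transform factorisation. The hypothesis $q\in\oi1d$ is exactly what keeps this decomposition inside the classical $L^q$ range. A minor secondary point is the $C_0$-extension of Theorem \ref{tbasic2} and the density of tensor products in $\mathcal{D}(\R^d\times S^{d-1})$, both of which are essentially routine.
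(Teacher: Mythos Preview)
Your proposal is correct and uses the same core ingredients as the paper (the Riesz potential $I_1$, the Riesz transforms $R_j$, and the H\"ormander--Mikhlin bound for $\mathcal{A}_{\xi_j\psi/|\xi|}$), but organises the computation dually. The paper constructs test functions $\phi_n=\varphi_1\,(I_1\!\circ\!\mathcal{A}_\psi)(\varphi_2 v_n)$, shows they are bounded in $W^{1,q'}(\R^d)$ (here $q\in\oi1d$ ensures $I_1:L^r\to L^{r^*}$, and the compact support of $\varphi_1$ brings the result back into the required Lebesgue space), and pairs \eqref{ns31} directly against $\phi_n$; this yields the main term plus a remainder involving $\partial_j\overline{\varphi_1}$, which is then shown to vanish in the limit. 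Your route instead applies $\mathcal{A}_\sigma$ to the equation and only afterwards tests with $\varphi v_n$, which avoids that cutoff--derivative remainder but trades it for (i) the $C_0$--extension of Theorem~\ref{tbasic2} (since $A_i\in C_0$ rather than $C_c$), and (ii) a divergence--form representation of $f_n$. On point~(ii), note that a general element of $W^{-1,q}(\R^d)$ carries an additional $L^q$ component $g_{0,n}$ besides $\sum_j\partial_j g_{j,n}$; that term is handled by $\mathcal{A}_\sigma g_{0,n}=c\,\mathcal{A}_\psi I_1 g_{0,n}\to0$ in $L^{q^*}$ (this is precisely where $q<d$ enters in your version) and then paired with the compactly supported $\varphi v_n$. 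The gap is minor, but you should include this term explicitly.
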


\begin{proof}
In order to prove the theorem, we need a particular multiplier, the
so called (Marcel) Riesz potential $I_1:=\lA_{|2\pi\xi|^{-1}}$, and
the Riesz transforms $R_j:=\lA_{\xi_j\over i|\xi|}$
\cite[V.1,2]{Ste}.  We note that [id.,V.2.3]
\begin{equation}\label{V23}
\int I_1(\phi)\partial_{x_j} g = \int (R_j \phi)g, \ \ g\in {\cal
S}(\R^d).
\end{equation}
From here, using the density argument and the fact that $R_j$ is bounded from
$L^p(\R^d)$ to itself, we conclude that $\pa_j I_1(\phi)=-R_j(\phi)$, for
$\phi\in L^p(\R^d)$.

We should prove that the H-distribution corresponding to
(the chosen subsequences of) $(u_n)$ and $(v_n)$ satisfies \eqref{pro_7}. To
this end, take the following sequence of test functions:
$$
\phi_n := \varphi_1 (I_1\!\circ\! {\cal A}_{\psi(\xi/|\xi|)})(\ph_2 v_n),
$$
where $\ph_1,\ph_2\in C^\infty_c(\R^d)$ and $\psi\in C^{\kappa}(S^{d-1})$, $\kappa=[d/2]+1$.
Then, apply the right-hand side of \eqref{ns31}, which converges
strongly to $0$ in $W^{-1,q}(\R^d)$ by the assumption, to a weakly
converging sequence $(\phi_n)$ in the dual space $W^{1,q'}(\R^d)$.

We can do that since $(\phi_n)$ is a bounded sequence in $W^{1,r}(\R^d)$ for any $r\in\oi1\infty$.

Indeed, ${\cal A}_{\psi}(\ph_2 v_n)$ is bounded in any $L^r(\R^d)$
($r>1$). By the well known fact \cite[Theorem V.1]{Ste} that $I_1$
is bounded from $L^q(\R^d)$ to $L^{q^\ast}(\R^d)$, for $q\in\oi1d$
and ${1\over q^*}={1\over q}-{1\over d}$, $\phi_n$ is bounded in
$L^{q^\ast}(\R^d)$ for all sufficiently large $q^\ast$. Then, take
$q^\ast\ge r$ and due to the compact support of $\ph_1$ we have that
$L^{q^\ast}$ boundedness implies the same in $L^r$. On the other
hand, $R_j$ is bounded from $L^r(\R^d)$ to itself, for any
$r\in\oi1\infty$, thus $\partial_{x_j} (\varphi_1 (I_1\!\circ
\!{\cal A}_{\psi(\xi/|\xi|)})(\ph_2 v_n))$ is bounded in
$L^r(\R^d)$.

Therefore we have (the sequence is bounded and $0$ is the only accumulation point,
so the whole sequence converges to $0$)
\begin{equation}
\label{zg_2} \lim\limits_{n\to \infty}
\dup{W^{-1,q}(\R^d)}{f_n}{\phi_n}{W^{1,q'}(\R^d)} = 0 \;.
\end{equation}

Concerning the left-hand side of \eqref{ns31},  according to \eqref{V23}
one has
\begin{align}
\label{zg_1}
\dup{W^{-1,q}(\R^d)}{\sum_{j=1}^d
    \partial_{x_j}(A_j u_n)}{\phi_n}{W^{1,q'}(\R^d)}
& = \int_{\R^d}\!\sum_{j=1}^d
    \overline\ph_1 A_j u_n \overline{{\cal A}_{\frac{\xi_j}{|\xi|}\psi(\xi/|\xi|)}(\ph_2u_n)}dx\\
& \!\!\!\!\!- \int_{\R^d}\!\partial_{x_j}\overline\ph_1 \sum_{j=1}^d A_j u_n
    \overline{(I_1\!\circ \!{\cal A}_{\psi(\xi/|\xi|)})(\ph_2 v_n)} dx. \nonumber
\end{align}
The first term on the right is of the form of the right-hand side of
\eqref{basic111}. The integrand in the second term is supported in
a fixed compact and weakly converging to $0$ in $L^p$, so strongly
in $W^{-1,r'}$, where $r$ is
such that $p=r^\ast$ (i.e.~$r=dp/(d-p)$). Of course, the argument
giving the boundedness of $\phi_n$ in $W^{1,q'}(\R^d)$ above applies
also to $r$ instead of $q'$.

Therefore, from \eqref{zg_2} and \eqref{zg_1} we conclude
\eqref{pro_7}.
\end{proof}

\begin{remark}
Notice that the assumption of the strong convergence of $f_n$ in
$W^{-1,q}(\R^d)$ can be relaxed to local convergence, as in the
proof we used a cutoff function $\ph_1$.
\end{remark}

We conclude the paper by another corollary of Theorem \ref{tbasic2}:
the well known Murat-Tartar div-curl lemma in the
$(L^p,L^{p'})$-setting \cite{mur1, tar_cc}.

\begin{theorem}
\label{cc}  Let ${\bf u}_n=(u^1_n,u_n^2)$ and ${\bf
v}_n=(v^1_n,v_n^2)$ be vector valued sequences converging to zero
weakly in $L^p(\R^2)$ and $L^{p'}(\R^2)$, respectively.

Assume the sequence $({\rm div} \,{\bf u}_n)=(\pa_x u^1_n+\pa_y
u^2_n)$ is bounded in $L^p(\R^2)$, and the sequence $({\rm curl}
\,{\bf v}_n)=(\pa_y v^1_n-\pa_x v^2_n)$ is bounded in
$L^{p'}(\R^2)$.

Then, the sequence $(u^1_n v^1_n + u^2_n v^2_n)$ converges to zero in the sense of
distributions (or vaguely in the sense of Radon measures).
\end{theorem}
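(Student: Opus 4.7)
The plan is to extract $H$-distributions for all four scalar pairs $(u_n^i,v_n^j)$ with $i,j\in\{1,2\}$, derive symbolic algebraic relations among them from the bounded-divergence and bounded-curl hypotheses via the localisation principle, and read off the conclusion by testing with the trivial symbol $\psi\equiv 1$. By the standard subsequence-of-every-subsequence principle it suffices to produce one subsequence along which the distributional convergence holds; four applications of Theorem \ref{tbasic2} and a diagonal extraction then provide a single subsequence (not relabeled) and complex distributions $\mu^{ij}\in{\cal D}'(\R^2\times S^1)$ such that
$$
\lim_n\int_{\R^2}{\cal A}_\psi(\varphi_1 u_n^i)\,\overline{\varphi_2 v_n^j}\,dx=\langle\mu^{ij},\varphi_1\overline{\varphi_2}\psi\rangle
$$
for all $\varphi_1,\varphi_2\in C_c(\R^2)$ and $\psi\in C^\kappa(S^1)$. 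Taking $\psi\equiv 1$ (so that ${\cal A}_1=I$) and real $\varphi_1=\varphi_2=\varphi$ gives
$$
\lim_n\int_{\R^2}\varphi^2(u_n^1 v_n^1+u_n^2 v_n^2)\,dx=\langle\mu^{11}+\mu^{22},\varphi^2\rangle,
$$
so it suffices to prove $\Lambda:=\mu^{11}+\mu^{22}=0$.

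Next, we apply the localisation principle (Theorem \ref{th_loc}), adapted to the present framework, to each of the two differential constraints. Since $\mathrm{div}\,\mathbf{u}_n\dscon 0$ in $L^p(\R^2)$ and is bounded there, by Rellich it converges strongly to $0$ in $W^{-1,q}_{\mathrm{loc}}(\R^2)$ for suitable $q\in\oi{1}{2}$. Repeating the test-function argument of Theorem \ref{th_loc} componentwise --- with the test sequence $\phi_n^j:=\varphi_1(I_1\circ{\cal A}_\psi)(\varphi_2 v_n^j)$ paired, for each $j=1,2$, against the equation on $\mathbf{u}_n$ --- yields
$$
\xi_1\mu^{1j}+\xi_2\mu^{2j}=0\quad\text{in }{\cal D}'(\R^2\times S^1),\quad j=1,2.
$$
Interchanging the roles of $\mathbf{u}_n$ and $\mathbf{v}_n$ and running the same argument for the curl equation $\pa_1 v_n^2-\pa_2 v_n^1$ gives
$$
\xi_2\mu^{i1}-\xi_1\mu^{i2}=0,\quad i=1,2.
$$
Substituting $\xi_2\mu^{21}=\xi_1\mu^{22}$ (curl with $i=2$) into $\xi_1\mu^{11}+\xi_2\mu^{21}=0$ (div with $j=1$) yields $\xi_1\Lambda=0$; analogously $\xi_2\Lambda=0$; whence $(\xi_1^2+\xi_2^2)\Lambda=0$, which on $S^1$ becomes $\Lambda=0$ as required.

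The principal obstacle is the adaptation of Theorem \ref{th_loc} away from its stated $L^\infty$-hypothesis on the auxiliary sequence: there $(v_n)\subset L^\infty$ is used to place $\phi_n$ uniformly in $W^{1,r}$ for every $r\in\oi{1}{\infty}$, whereas here the auxiliary sequences $(v_n^j)$ or (after interchange) $(u_n^i)$ lie only in $L^{p'}$, respectively $L^p$. The compact support of $\varphi_1$, together with the $L^r\to L^{r^\ast}$ mapping of the Riesz potential $I_1$ (valid for $r<d=2$, with an appropriate local $L^\infty$-bound when $r>d$) and the $L^r\to L^r$ boundedness of the Riesz transforms, still places $\phi_n^j$ uniformly in some $W^{1,q'}_{\mathrm{loc}}$ suitable for duality with the $W^{-1,q}_{\mathrm{loc}}$-convergent right-hand side; a careful matching of exponents across the two applications (div on $\mathbf{u}_n$ and curl on $\mathbf{v}_n$) is the main technical point. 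The minor issue that the constant coefficients of $\mathrm{div}$ and $\mathrm{curl}$ are not in $C_0(\R^2)$ is absorbed by the cutoff $\varphi_1$ already present in the test function.
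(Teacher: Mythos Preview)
Your proposal is correct and follows the same overall route as the paper: extract the four $H$-distributions $\mu^{ij}$, derive the relations $\xi_1\mu^{1j}+\xi_2\mu^{2j}=0$ and $\xi_2\mu^{i1}-\xi_1\mu^{i2}=0$ from the div and curl constraints, deduce $\mu^{11}+\mu^{22}=0$, and read off the conclusion at $\psi\equiv 1$. The one substantive organisational difference concerns precisely what you flag as the ``principal obstacle'': the paper does not try to adapt Theorem~\ref{th_loc} at all. Instead of building the test function from $v_n^j$ and worrying about the missing $L^\infty$ bound, it applies the Riesz potential directly to the localised constraint on $u_n$, observing that
\[
{\cal A}_{\psi(\xi/|\xi|)/|\xi|}\bigl(\partial_x(\varphi u_n^1)+\partial_y(\varphi u_n^2)\bigr)\longrightarrow 0\quad\text{strongly in }L^p(\R^2)
\]
(by local compactness of $I_1$ on weakly null, compactly supported $L^p$ sequences), and then simply pairs this against $\varphi v_n^j\in L^{p'}$. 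For the curl relations it likewise sets $w_n^j=\varphi\,{\cal A}_{\psi/|\xi|}(\varphi u_n^j)$, again built from $u_n$, and pairs $\mathrm{curl}(\varphi{\bf v}_n)$ (bounded in $L^{p'}$) against $w_n^j\to 0$ strongly in $L^p$. Keeping $I_1$ always on the $u_n$ side reduces the exponent bookkeeping to the single duality $L^p$--$L^{p'}$, so the careful matching you anticipate never arises; your route works too, but the paper's arrangement is the cleaner way to execute the same idea.
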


\begin{proof}
Denote by $\mu^{ij}$ the $H$-distribution corresponding to (some sub)
sequences (of) $(u^i_n)$ and $(v^j_n)$, $i,j=1,2$.

Since $(\pa_x u^1_n+\pa_y u^2_n)$ is bounded in $L^p(\R^2)$, and
$(\pa_y v^1_n-\pa_x v_n^2)$ is bounded in $L^{p'}(\R^2)$, they are weakly precompact,
while the only possible limit is zero, so
\begin{equation}
\label{novo-3}
\begin{split}
& \pa_x u^1_n + \pa_y
u^2_n\rightharpoonup 0 \ \ {\rm in} \ \ L^p\;,\qquad\mbox{and}\\
& \pa_y v^1_n-\pa_x v_n^2 \rightharpoonup 0 \ \ {\rm in} \ \ L^{p'}.
\end{split}
\end{equation}
Now, from the compactness properties of the Riesz potential $I_1$
(see the proof of previous theorem), we conclude that for every
$\varphi\in C_c(\R^2)$ the following limit holds strongly in
$L^p(\R^2)$:
\begin{align}
\label{brg_1} &{\cal A}_{\psi(\xi/|\xi|)\frac{\xi_1}{|\xi|}}(\varphi
u_n^1)+{\cal A}_{\psi(\xi/|\xi|)\frac{\xi_2}{|\xi|}}(\varphi
u_n^2)={\cal A}_{\frac{\psi(\xi/|\xi|)}{|\xi|}}(\pa_x(\varphi
u_n^1)+\pa_y(\varphi u_n^2))\to 0\;.
\end{align}
Multiplying \eqref{brg_1} first by $\varphi
v_n^1$ and then by $\varphi v_n^2$, integrating over $\R^2$ and
passing to the limit $n\to \infty$, we conclude from \eqref{basic111}, due to
the arbitrariness of $\psi$ and $\varphi$:
\begin{equation}
\label{brg_3} \xi_1\mu^{11}+\xi_2\mu^{21}=0,\qquad \mbox{and}\qquad
\xi_1\mu^{12}+\xi_2\mu^{22}=0\,.
\end{equation}

Next, take
$$w^j_n=\varphi {\cal A}_{\frac{\psi(\xi/|\xi|)}{|\xi|}}(\varphi u_n^j) \in
W^{1,p'}(\R^d), \ \ j=1,2.
$$

From \eqref{novo-3} we get
\begin{align*}
&\langle (\varphi v_n^1,-\varphi v_n^2),\nabla w^j_n
\rangle=-\langle {\rm curl} (\varphi v_n^1,\varphi v_n^2), w^j_n
\rangle \to 0 \ \ {\rm as} \ \ n\to \infty,
\end{align*}
for $j=1,2$. Rewriting it in the integral formulation, we obtain
from \eqref{basic111} in the same way that we obtained
\eqref{brg_3}:
\begin{equation}
\label{brg_4} \xi_2\mu^{11}-\xi_1\mu^{12}=0,\ \
\xi_2\mu^{21}-\xi_1\mu^{22}=0.
\end{equation}
From the algebraic relations \eqref{brg_3} and \eqref{brg_4}, we can easily conclude
\begin{align*}
&\xi_1\left(\mu^{11}+\mu^{22} \right)=0 \ \ {\rm and} \ \
\xi_2\left(\mu^{11}+\mu^{22} \right)=0,
\end{align*}
implying that the measure $\mu^{11}+\mu^{22}$ is
supported on the set $\{\xi_1=0\}\cap \{\xi_2=0\}\cap P=\emptyset$,
which implies $\mu^{11}+\mu^{22}\equiv 0$.

After inserting $\psi\equiv 1$ in the definition of $H$-distribution
\eqref{basic111}, we immediately reach the conclusion.
\end{proof}

\noindent{\bf Acknowledgement:} Originally, Theorem \ref{tbasic2} was
proved only in the case $q=\infty$. We would like to thank
Martin Lazar for pointing out the possibility to extend the
theorem to more general values of $q$.

\end{document}